\documentclass[12pt,leqno]{scrartcl}


\usepackage{a4wide} 


\usepackage{isolatin1}

\usepackage{theorem}

\usepackage{amsmath}


\usepackage{amssymb}


\usepackage{amscd}


\usepackage[all]{xy}


\usepackage{times}


\usepackage{mathrsfs}







\theorembodyfont{\sl}
\newtheorem{proposition}[subsection]{Proposition}

\newtheorem{lemma}[subsection]{Lemma} 
\newtheorem{lemma2}[subsubsection]{Lemma} 
\newtheorem{theorem}[subsection]{Theorem}

\newtheorem{conjecture}[subsection]{Conjecture}


\theorembodyfont{\rmfamily}

\newtheorem{remark}[subsection]{Remark}




\newenvironment{proof}{\vspace{0.3cm}\noindent
\emph{Proof.}}{\hfill $\square$ \vspace{0.3cm}}
\newenvironment{proofarg}[1]{\vspace{0.3cm}\noindent
\emph{Proof\nobreakspace#1.}}{\hfill
$\square$ \vspace{0.3cm}}


\numberwithin{equation}{subsubsection}









\renewcommand{\mathcal}{\mathscr}




\renewcommand{\bf}{\mathbf}



\newcommand{\C}{\mathbb{C}}
\newcommand{\Nat}{\mathbb{N}}
\newcommand{\R}{\mathbb{R}}
\newcommand{\Q}{\mathbb{Q}}
\newcommand{\Z}{\mathbb{Z}}

\newcommand{\Af}{\mathbb{A}_f}

\newcommand{\Ade}{\mathbb{A}}


\renewcommand{\H}{\mathrm{H}}
\newcommand{\IH}{\mathrm{IH}}
\newcommand{\IC}{\mathrm{IC}}



\newcommand{\GL}{\bf{GL}}

\newcommand{\SL}{\bf{SL}}

\newcommand{\GU}{\bf{GU}}

\newcommand{\Ar}{\mathrm{A}}

\newcommand{\G}{\bf G}

\newcommand{\der}{\mathrm{der}}
\newcommand{\ad}{\mathrm{ad}}


\newcommand{\F}{\mathcal{F}}




\newcommand{\adj}{\mathrm{adj}}  





\newcommand{\fl}{\longrightarrow}

\newcommand{\fle}{\longmapsto}

\DeclareMathOperator{\Gal}{Gal} 
\newcommand\ggoth{\mathfrak{g}}
\newcommand{\Hecke}{\mathcal{H}}

\newcommand{\K}{\mathrm{K}}

\newcommand{\Lf}{\mathcal{L}} 
\newcommand{\M}{\mathcal{M}} 

\mathchardef\mhyphen="2D

\DeclareMathOperator{\Ob}{\mathrm{Ob}}
\newcommand\oQ{\overline{\Q}}


\newcommand\quash[1]{}

\newcommand\Sgoth{\mathfrak{S}}

\newcommand{\sous}{\setminus}
\DeclareMathOperator{\Spec}{Spec}

\newcommand{\Tr}{\mathrm{Tr}} 

\newcommand{\X}{\mathcal{X}}


\relpenalty=10000 \binoppenalty=10000


\sloppy

\setlength{\parskip}{0.5\baselineskip}

\title{The standard sign conjecture on algebraic cycles: the case of Shimura varieties}
\author{Sophie Morel and Junecue Suh}

\begin{document}

\maketitle

\section{Introduction}

First we recall the standard sign conjecture, its origin, statement and significance.

Let $k$ be a field and fix a Weil cohomology theory $\H^*$ on the category of 
smooth projective varieties over $k$ with coefficients in a field $F$ of characteristic zero
(\textit{cf.} \cite{An} 3.3.1).
Denote by $M_{hom}(k)_F$ the category of homological motives 
over $k$ associated with $\H^*$, and by $M_{num}(k)_F$ the category 
of numerical motives over $k$, both with coefficients
in $F$ (\textit{cf.} \cite{An} 4.1). 
The functor $\H^*$ defines a realization functor from $M_{hom}(k)_F$ to the
category of graded $F$-vector spaces, that we will denote by $\H^*$ (\textit{cf.}
\cite{An} 4.2.5). 

The K{\"u}nneth standard conjecture (cf. \cite{An} 5.1.1) states that, 
for every smooth projective variety $X/k$, the K{\"u}nneth projectors $p^i_X$ 
onto the direct factor $\H^i(X)$ of $\H^{\ast}(X)$ are given by algebraic cycles.
In the classical theory of motives (of Grothendieck), one uses it to modify
the sign in the commutativity constraints in the $\otimes$-structure in order to
get the Tannakian category of homological motives.

For classical cohomology theories, the conjecture would be a consequence 
of the Hodge conjecture over the complex numbers
and the Tate conjecture over finitely generated fields. 
Namely, as noted by Grothendieck (see \cite{Ta} p.99), 
the projectors (and any linear combination thereof) clearly 
are morphisms of Hodge structures and commute with the Galois action,
and these cohomology classes make natural test cases
for the Hodge and Tate conjectures.

The strongest evidence for the K\"unneth conjecture is given by Katz and Messing (\cite{KM}, Theorem 2): 
It is true when $k$ is algebraic over a finite field and $\H^{\ast}$ is 
either the $\ell$-adic cohomology for a prime $\ell\neq \mathrm{char}(k)$,
or the crystalline cohomology.

For the purpose of modifying the commutativity constraints and getting the Tannakian category,
one needs somewhat less, 
and the necessary weakening is called the standard ``sign'' conjecture (terminology proposed by Jannsen;
\textit{cf.} \cite{An} 5.1.3 for the formulation and see \cite{An} 6.1.2.1 for obtaining the Tannakian category):

\begin{conjecture} For every $M\in\Ob M_{hom}(k)_F$, there exists a
decomposition $M=M^+\oplus M^-$ such that $\H^*(M^+)$ (resp.
$\H^*(M^-)$) is concentrated in even (resp. odd) degrees. 
\end{conjecture}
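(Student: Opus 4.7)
The plan is to establish the sign decomposition for motives of smooth projective varieties arising from Shimura data by combining the automorphic description of their cohomology with the fact that Hecke correspondences are algebraic cycles, and then propagating the result from the interior to a smooth compactification via the decomposition theorem.

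First I would reduce to the case $X = \overline{S}_K$, a smooth projective toroidal compactification of a Shimura variety $S_K$ of level $K$. Applying the BBD decomposition theorem to the proper map $\overline{S}_K \to S_K^*$ onto the Baily-Borel compactification, and invoking the fact that the boundary strata of $S_K^*$ are essentially Shimura varieties of smaller dimension, one can argue inductively on dimension. The core problem is thus reduced to proving the sign decomposition for the intersection cohomology $\IH^*(S_K^*)$.

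Next I would use Matsushima's formula together with Zucker's conjecture (Looijenga, Saper-Stern, Looijenga-Rapoport) to decompose $\IH^*(S_K^*)$ as $\bigoplus_\pi m(\pi)\, \pi_f^K \otimes \H^*(\ggoth, K_\infty; \pi_\infty \otimes V_\lambda)$, indexed by automorphic representations $\pi = \pi_\infty \otimes \pi_f$. The Hecke algebra $\Hecke(K)$ acts through algebraic correspondences on $\overline{S}_K$ and, up to L-packets, separates the isotypic components, producing algebraic projectors onto each factor with a fixed archimedean component $\pi_\infty$. For each cohomological $\pi_\infty$, the Vogan-Zuckerman classification gives an explicit description of $\H^*(\ggoth, K_\infty; \pi_\infty \otimes V_\lambda)$ as an exterior-algebra-type object, pinning down the degrees in which the cohomology is concentrated. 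When these degrees are all of the same parity (which is automatic for many $\pi_\infty$, e.g.\ holomorphic limits of discrete series), one is done; otherwise the even/odd separation must be performed within each Hecke-isotypic component using further algebraic input, such as the hard Lefschetz operator (intersection with a hyperplane section) or finer Hecke data distinguishing central characters or operators at additional primes.

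The main obstacle will be two-fold. First, one must extend Hecke correspondences from $S_K$ to genuine algebraic correspondences on the toroidal compactification $\overline{S}_K \times \overline{S}_K$ whose action on cohomology is compatible with the automorphic decomposition; this uses the geometry of toroidal boundary charts and typically requires refinement of the toroidal data. Second, when a single cohomological $\pi_\infty$ contributes in degrees of mixed parity, realizing the even/odd separation by \emph{algebraic} correspondences (and not merely cohomological operators) requires matching the representation-theoretic structure of the $(\ggoth, K_\infty)$-cohomology with actual algebraic cycles on $\overline{S}_K \times \overline{S}_K$; this is the heart of the problem and is where the specific geometry of Shimura varieties, beyond their automorphic description, must play a role.
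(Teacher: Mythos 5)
The statement you are proving is not a theorem of the paper but the conjecture itself; the paper only establishes it for intersection motives of PEL Shimura varieties assuming condition (C\'{}), so I read your proposal as aimed at that restricted case. Your outline shares the main architectural elements with the paper --- Zucker/Matsushima to get an automorphic decomposition of $\IH^*$, the Hecke algebra acting by algebraic correspondences to isolate isotypic pieces, Vogan--Zuckerman for the degrees --- but it diverges at exactly the point where the real work lies, and the fallback you propose is ruled out explicitly in the paper.

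The crucial fact, which you leave open as ``the heart of the problem,'' is the following (Theorem \ref{thC} in the paper): \emph{for a fixed finite part $\pi_f$}, the degrees $i$ with $\sigma^i(\pi_f)\neq 0$ all have the same parity. You instead propose to fix the archimedean component $\pi_\infty$ and then try to handle the mixed-parity case by additional algebraic input. This cannot work as stated. The Hecke algebra $\Hecke_\K$ only sees $\pi_f$, not $\pi_\infty$; the decomposition (\ref{eqn-Hecke-dec}) is indexed by $\pi_f$, and in general several $\pi_\infty$ (hence potentially several parities of $R=q(\G_\R)-q(L)$) contribute to a single $\pi_f$-isotypic component. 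So a Hecke projector onto a $\pi_f$-block does not a priori have pure parity, and one must \emph{prove} that it does. The paper's proof of this is the nontrivial representation-theoretic input: under condition (C), the multiplicity formula forces $\langle\cdot,\pi_\infty\rangle$ restricted to $\Sgoth_\psi$ to be determined by $\pi_f$, and Kottwitz's Lemmas 9.1 and 9.2 show that $(-1)^{q(L)} = \langle\lambda_{\pi_\infty}, s_\psi\rangle$ with $\lambda_{\pi_\infty}\langle\cdot,\pi_\infty\rangle$ independent of $\pi_\infty$; combining these, the parity of $R$ is read off from $\pi_f$. Your plan contains none of this, and without it there is no way to conclude.

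Your proposed fallback for the mixed-parity case --- the hard Lefschetz operator, or ``finer Hecke data'' --- fails for the reason noted in the paper's remark after Theorem \ref{thC}: the Lefschetz operator commutes with the Hecke action, so it preserves each $\pi_f$-isotypic component and cannot separate degrees within one; the same is true of the Hodge $\C^\times$-action. This is precisely why the method proves the sign conjecture but \emph{not} the full K\"unneth conjecture, and it rules out the escape route you invoke. There is also a second, more minor divergence: you pass to a toroidal compactification and propose an inductive BBD argument on the boundary of the minimal compactification, whereas the paper works directly with Wildeshaus' intersection motive over $\overline{S}^\K$ (with Ancona's motivic coefficient systems), and uses his motivic Hecke correspondences $\widetilde{KgK}$. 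Your route is plausible in spirit but would require extending Hecke correspondences to the toroidal compactification and controlling the boundary terms, which the paper deliberately avoids by realizing $\IH^*$ as the Betti realization of a Chow motive. Neither of these substitutes for the missing parity argument, which is where your proposal genuinely has a gap.
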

Equivalently, for every smooth projective $X/k$, the sum $p^{+}_X$ 
(resp. $p^{-}_X$) of the even (resp. odd) K\"unneth projectors on $\H^*(X)$ 
is given by an algebraic cycle.
Note that such a decomposition is necessarily unique (up to unique isomorphism). 

This conjecture has, in addition to the consequences in terms of
the category of homological motives and the algebraicity of (Hodge or Tate) cohomology classes,
also the following interesting consequence, due to
André and Kahn. Recall that the numerical and the homological equivalences
on algebraic cycles on projective smooth varieties are conjectured to be the same.

\begin{theorem}(\cite{An} 9.3.3.3) Let $\M$ be an additive $\otimes$-subcategory
of $M_{hom}(k)_F$ and let $\M_{num}$ be its image in $M_{num}(k)_F$.
If the sign conjecture is true for every object of $\M$, then the functor
$\M\fl\M_{num}$ admits a section compatible with $\otimes$, unique up to
$\otimes$-isomorphism.

\end{theorem}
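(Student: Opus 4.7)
My plan is to reduce the statement to a Wedderburn-type lifting theorem for $F$-linear pseudo-abelian rigid tensor categories with semisimple quotient.

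The first step is to use the sign conjecture to modify the commutativity constraint on $\M$. Writing each $M\in\M$ as $M^+\oplus M^-$ and inserting the Koszul sign whenever two ``odd'' factors are swapped, one obtains a rigid $\otimes$-category $\widetilde{\M}$ with the same underlying additive structure as $\M$, in which the categorical trace of $\id_M$ computed via $\H^*$ equals $\dim\H^*(M^+)+\dim\H^*(M^-)$, i.e.\ the total Betti number, instead of the Euler characteristic. Hence every object of $\widetilde{\M}$ has a non-negative integer categorical dimension, and is ``positively dimensional'' in the Kimura-O'Sullivan sense.

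Let $\mathcal{N}\subset\widetilde{\M}$ be the $\otimes$-ideal of morphisms that vanish in $\M_{num}$ (numerical equivalence is insensitive to the symmetry twist). Then $\widetilde{\M}/\mathcal{N}=\M_{num}$ is abelian semisimple $F$-linear, by Jannsen's theorem. The key claim is that $\mathcal{N}$ is contained in the monoidal radical of $\widetilde{\M}$: given $f:M\to N$ in $\mathcal{N}$ and $g:N\to M$ arbitrary, the endomorphism $gf\in\End(M)$ satisfies $\mathrm{tr}((gf)^n)=0$ for every $n\geq 1$ by the definition of numerical equivalence, and the positive dimensionality available in $\widetilde{\M}$ then forces $gf$ to be nilpotent, so $\id_M-gf$ is invertible.

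Once the radicality of $\mathcal{N}$ is in hand, the conclusion is formal: a general lifting theorem of Andr\'e-Kahn for pseudo-abelian rigid tensor categories (see \cite{An}) produces a $\otimes$-section $\M_{num}\to\widetilde{\M}=\M$, unique up to $\otimes$-isomorphism, obtained by lifting through the radical ideal the orthogonal families of idempotents describing the semisimple decomposition of each object of $\M_{num}$. The main obstacle is the promotion of ``numerically trivial'' to ``nilpotent,'' which is precisely where the sign conjecture enters; the accompanying manipulation of commutativity constraints and idempotent lifting through a radical ideal are routine.
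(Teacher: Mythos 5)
Your outline follows the André--Kahn strategy (which is the content of \cite{An} 9.3.3.3, cited rather than reproved in the paper): twist the commutativity constraint using the sign conjecture, show the numerical ideal is contained in the radical, and apply their Wedderburn-type lifting. The overall route is therefore the right one. However, there is a genuine gap in the nilpotency step, and a smaller omission earlier.

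The gap: you assert that non-negative integral dimension (``positive dimensionality'') in $\widetilde{\M}$, together with $\mathrm{tr}((gf)^n)=0$ for all $n$, ``forces $gf$ to be nilpotent.'' That inference is not valid in an abstract rigid $F$-linear $\otimes$-category, even with $\End(\mathbf{1})=F$ and char~$0$: having non-negative integer dimensions does not give the Kimura--O'Sullivan finite-dimensionality (vanishing of $\wedge^{n+1}M^+$ and $\Sym^{n+1}M^-$) that your phrase ``in the Kimura--O'Sullivan sense'' alludes to. The ingredient you are silently using is that $\M_{hom}$ carries a \emph{faithful} $\otimes$-realization $\H^*$, and that after the symmetry twist $\H^*$ lands in (graded) vector spaces equipped with the trivial braiding, so that the categorical trace on $\widetilde{\M}$ realizes as the ordinary linear-algebra trace. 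Only then does $\mathrm{tr}((gf)^n)=0$ for all $n\geq 1$ give (via Newton's identities over a field of characteristic $0$) that $gf$ is nilpotent on $\H^*(M)$, and faithfulness of $\H^*$ lifts this to nilpotence in $\End_{\M_{hom}}(M)$. (Equivalently, one can first use faithfulness to deduce genuine Kimura finiteness of $M^+$ and $M^-$ in $\widetilde{\M}$ and then invoke the Kimura--O'Sullivan nilpotence theorem; either way, faithfulness is essential and must be stated.)

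A second, lesser point: you assert that ``numerical equivalence is insensitive to the symmetry twist,'' i.e. that the kernel ideal $\mathcal{N}$ is the same whether computed with the Koszul trace on $\M$ or the untwisted trace on $\widetilde{\M}$. This is true, but it is not automatic: it uses the algebraicity of the projectors $p^{\pm}$ provided by the sign conjecture (replace a test morphism $g$ by $p^{\pm}_M g$ to separate the even and odd contributions to $\mathrm{tr}(gf)$). Since you are trying to locate precisely where the sign conjecture enters, this step deserves to be made explicit rather than asserted.
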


Next we turn to the main geometric objects of this paper, Shimura varieties.
In this paper, we will take for $k$ a subfield of $\C$, and
for $\H^*$ the cohomology theory that sends a smooth projective
variety $X$ over $k$ to the Betti cohomology of $X(\C)$ with coefficients
in number fields $F$. 

Let $(\G,\X,h)$ be pure Shimura data (cf \cite{De-VS}
2.1.1 or \cite{P2} 3.1), $E\subset\C$ the reflex field 
and $\K$ a neat open compact subgroup of $\G(\Af)$. 
Denote by $S^\K$ the Shimura variety at level $\K$
associated to $(\G,\X,h)$; it is a smooth quasi-projective variety over $E$.
Assume that $E\subset k$. If $S^\K$ is projective,
denote by $M(S^\K)$ the image of $S^\K$ in
$M_{hom}(k)_{\Q}$.



For a general connected reductive group $\G$ over $\Q$, 
we say that $\G$ satisfies condition (C), if 
\begin{itemize}
\item[(i)] Arthur's conjectures (cf section \ref{L2A}) are known for $\G$,
\item[(ii)] the cohomological Arthur parameters for $\G$ satisfy a certain
condition that will be spelled out at the end of section \ref{L2A} (roughly,
that what happens at the finite places determines the parameter)
and
\item[(iii)] the classification of cohomological representations of $\G(\R)$ giv
en
by Adams and Johnson in \cite{AJ} agrees with the classification given
by Arthur's conjectures.

\end{itemize}

Given the current state of knowledge of (C) (see below), 
we will also consider a weaker condition. We say that $\G$ satisfies condition (C\'{}), 
if there exists a $\Q$-algebraic subgroup $\G'$ of $\G$
which contains the derived group $\G^{\der}$ and satisfies (C).


The goal of this paper is to prove the following theorem :

\begin{theorem}\label{thA}
Let $(\G,\X,h)$ be simple PEL Shimura data. 
Assume that $\G$ is anisotropic over $\Q$ modulo its center (so that
$S^\K$ is projective and smooth) and that it satisfies condition (C\'{}).

Then $M(S^\K)\in M_{hom}(k)_{\Q}$ satisfies the sign conjecture.

\end{theorem}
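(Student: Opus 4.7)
The plan is to exhibit the sum of even K\"unneth projectors $p^{+}_{S^\K}$ (and symmetrically $p^{-}_{S^\K}$) as a $\Q$-linear combination of Hecke correspondences. Hecke correspondences come from pullback--pushforward along the finite \'etale maps $S^{\K'} \to S^\K$ for levels $\K' \subset \K$ and are defined over the reflex field $E \subset k$; they therefore give algebraic cycle classes on $S^\K \times_k S^\K$, so such a realization will prove the sign conjecture for $M(S^\K)$.

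Since $\G$ is anisotropic modulo center, $S^\K$ is projective, $L^2$-cohomology equals singular cohomology, and Matsushima's formula yields a $\G(\Af)$-equivariant decomposition
\[
\H^*(S^\K(\C), \C) \cong \bigoplus_\pi m(\pi)\, \pi_f^\K \otimes \H^*(\ggoth, K_\infty; \pi_\infty),
\]
the sum ranging over automorphic representations $\pi$ of $\G(\Ade)$ whose archimedean component is cohomological. Using condition (C\'{}), one restricts each $\pi$ to $\G'(\Ade)$ and applies Arthur's conjectures for $\G'$ to partition the sum into isotypic pieces $\H^*_\psi$ indexed by Arthur parameters $\psi$ of $\G'$. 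By part (iii) of (C), the archimedean members of each packet are those of Adams--Johnson \cite{AJ}; their construction by cohomological induction from a $\theta$-stable parabolic attached to $\psi_\infty$ forces the $(\ggoth', K'_\infty)$-cohomology of every such member to be supported in integers of a single parity. Because $\G^{\der} = \G'^{\der}$, the same parity constraint holds for $(\ggoth, K_\infty)$-cohomology, the center contributing only in degree $0$. Thus each $\H^*_\psi$ is concentrated in degrees of one parity, and $p^+$ is the spectral projector onto those $\psi$ of even parity.

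To realize this projector by Hecke correspondences, invoke part (ii) of (C): an Arthur parameter is determined by its finite components, and hence by the system of Hecke eigenvalues it produces on $\pi_f^{\K \cap \G'(\Af)}$. Since $\H^*(S^\K(\C), \Q)$ is finite-dimensional, only finitely many $\psi$ appear; the image of the rational $\G'(\Af)$-Hecke algebra in $\End_\Q(\H^*(S^\K(\C), \Q))$ is a finite-dimensional semisimple $\Q$-algebra on which these finitely many eigensystems are pairwise distinct. A Vandermonde argument produces an explicit $\Q$-linear combination of Hecke operators equal to $p^+$ on $\H^*(S^\K(\C), \C)$; since $p^+ \in \End_\Q(\H^*(S^\K(\C), \Q))$, the identity already holds over $\Q$.

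The main difficulty will be the archimedean analysis: condition (C\'{}) supplies Arthur's decomposition only for $\G'$, whereas parity has to be controlled on the cohomology of a Shimura variety attached to $\G$. Exploiting $\G^{\der} = \G'^{\der}$ to match $(\ggoth, K_\infty)$- and $(\ggoth', K'_\infty)$-cohomology requires care, and one must also verify that the restricted $\G'(\Af)$-Hecke algebra still separates the finitely many Arthur parameters contributing to $\H^*(S^\K(\C), \Q)$; the latter follows from the rigidity of (ii). Once these points are settled, algebraicity is immediate from the realization as a Hecke correspondence.
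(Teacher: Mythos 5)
Your overall strategy coincides with the paper's: reduce to realizing the even/odd projector as a $\Q$-linear combination of Hecke correspondences, using Arthur's conjectures (via (C\'{})) and the Adams--Johnson classification to control parities, and Jacobson density (your ``Vandermonde argument'') to produce the Hecke element. The paper factors this through a separate intermediate statement (Theorem \ref{thC}: for each $\pi_f$, the spaces $\sigma^i(\pi_f)$ are all even or all odd) plus a deduction lemma, and it carries out the passage from $\G$ to $\G'$ explicitly by reducing to connected Shimura varieties, but these are organizational rather than substantive differences.

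However, your parity argument has a genuine gap. You assert that ``each $\H^*_\psi$ is concentrated in degrees of one parity,'' i.e.\ that the global Arthur parameter $\psi$ determines the parity. This is not what the Adams--Johnson/Vogan--Zuckerman theory gives, and it is not true in general. What Vogan--Zuckerman gives is that for a \emph{single} $\pi_\infty$, the $(\ggoth,\K'_\infty)$-cohomology lives in degrees $R + 2\Nat$ with $R = q(\G_\R) - q(L)$ for the associated $\theta$-stable parabolic; but different members $\pi_\infty$ of the \emph{same} local packet $\Pi_{\psi_\infty}$ correspond to different pairs $(L,Q)$, hence different values and parities of $R$. Kottwitz's Lemma 9.1 makes this precise: $(-1)^{q(L)} = \langle \lambda_{\pi_\infty}, s_\psi\rangle$, which visibly depends on $\pi_\infty$ through $\lambda_{\pi_\infty}$, not just on $\psi$. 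The paper's actual argument is finer: fixing $\pi_f$, Arthur's multiplicity formula forces the character $\langle \cdot, \pi_\infty\rangle$ on $\Sgoth_\psi$ to be determined by $\pi_f$; combining with Kottwitz's Lemma 9.2 (which says $\lambda_{\pi_\infty}\langle\cdot,\pi_\infty\rangle$ is constant on the packet), the restriction of $\lambda_{\pi_\infty}$ to $\Sgoth_\psi$, hence the parity of $R$, is determined by $\pi_f$. So the correct statement is ``for each $\pi_f$, the contribution has a single parity,'' not ``for each $\psi$''. Your Hecke-separation step is also slightly off-target for the same reason: the Hecke algebra separates the $\pi_f$'s (and each $\pi_f$ has a definite parity), not the $\psi$'s. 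Relatedly, the transition from $\G$ to $\G'$ (so that Arthur's conjectures for $\G'$ can be applied to cohomology of a $\G$-Shimura variety) is not achieved by ``restricting each $\pi$ to $\G'(\Ade)$''; the paper instead passes to the system of connected locally symmetric spaces, which depends only on $(\G^{\ad},\G^{\der},\X^+)$ and therefore is shared by $\G$ and $\G'$ since $\G' \supset \G^{\der}$.
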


If $\G$ is not anisotropic modulo its center, then $S^\K$ is not projective,
so we can not talk about its homological motive. A possible generalization
is the motive representing the intersection cohomology of the
minimal compactification of $S^\K$. Such a motive is not known to
exist for general varieties (though we certainly expect that it does),
but in the case of minimal compactifications of Shimura varieties
it has been constructed by Wildeshaus, even in the category of Chow motives
over $E$ (cf \cite{Wil} Theorems 0.1 and 0.2).
We then have the following generalization of the previous theorem :

\begin{theorem}\label{thB}
Let $(\G,\X,h)$ be simple PEL Shimura data, and assume that $\G$ satisfies condition (C\'{}).
Denote by $IM(S^\K)$ the ``intersection motive'' of the minimal compactification of $S^\K$.

Then $IM(S^\K)$ satisfies the sign conjecture.

\end{theorem}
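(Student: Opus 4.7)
The plan is to exploit Wildeshaus's realization of $IM(S^{\K})$ as a Chow motive equipped with a natural action of the spherical Hecke algebra by motivic correspondences, and to exhibit the even and odd K\"unneth projectors on $\IH^*(S^{\K}_{\oQ},\oQ_\ell)$ as the realizations of idempotents in this Hecke action. Since $IM(S^{\K})$ lives in the Chow motivic category, such idempotents are given by honest algebraic cycles, which is what the sign conjecture demands.

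First I would apply Zucker's conjecture to identify $\IH^*(S^{\K}_\C,\C)$ with $L^2$-cohomology, and then invoke condition (C)(i) (Arthur's conjectures, applied to $\G'$) to obtain a decomposition
$$\IH^*(S^{\K}_\C,\C)\;\simeq\;\bigoplus_\psi\bigoplus_{\pi\in\Pi_\psi}m(\pi)\,\pi_f^{\K}\otimes H^*(\ggoth,K_\infty;\pi_\infty\otimes V_\mu),$$
where $\psi$ runs through the cohomological Arthur parameters occurring. Only finitely many $\psi$ contribute, since the left-hand side is finite-dimensional.

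Second, combining the Adams-Johnson classification with condition (C)(iii), I would describe the archimedean constituents $\pi_\infty$ of each packet $\Pi_\psi$ as cohomologically induced $A_{\goth{q}}(\lambda)$-modules and then establish the key parity statement: the entire contribution of a single Arthur parameter $\psi$ to $\IH^*(S^{\K})$ lies in degrees of a single parity depending only on $\psi$. This combines the explicit ranges $[R_{\goth{q}},R_{\goth{q}}+l(\goth{q})]$ of the relative Lie algebra cohomologies with Arthur's multiplicity formula, so that potential contributions of the opposite parity inside a packet cancel after summation.

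Third, condition (C)(ii) says that $\psi$ is determined by its finite component, whence strong multiplicity one yields an element $e^+$ in the spherical Hecke algebra, with coefficients in $\Q$, whose action on $\IH^*$ is the identity on even-parity packets and zero on odd-parity ones. By Wildeshaus's construction, $e^+$ lifts to an idempotent endomorphism of $IM(S^{\K})$ represented by an algebraic cycle, producing the decomposition $IM(S^{\K})=IM(S^{\K})^+\oplus IM(S^{\K})^-$ demanded by the sign conjecture. To descend from (C) to (C\'{}), one exploits the dual map ${}^L\G\to{}^L\G'$ (available because $\G'\supset\G^{\der}$), through which the Arthur parameters, the parity statement, and the Hecke separation for $\G'$ transfer to analogous statements for $\G$. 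The single serious obstacle is the parity claim above: the delicate representation-theoretic verification that, inside one Adams-Johnson $A$-packet, summation of $(\ggoth,K_\infty)$-cohomologies weighted by Arthur's sign character yields degrees of a single parity---all subsequent steps are formal given the motivic Hecke action supplied by Wildeshaus.
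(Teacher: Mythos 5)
Your outline matches the paper's strategy: prove a parity statement (the paper's Theorem~\ref{thC}) to the effect that each $\pi_f$ contributes to $\IH^*$ in only one parity, separate the contributing $\pi_f$'s by an element of the Hecke algebra, and lift that element to a motivic correspondence via Wildeshaus. That is exactly the shape of the paper's argument. However, there are two genuine gaps and two mischaracterizations that matter.

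The main gap is exactly where you flag it: you state the parity claim (``contributions of the opposite parity inside a packet cancel after summation'') but give no mechanism for why the multiplicity formula should annihilate exactly the archimedean members of a packet whose $(\ggoth,K_\infty)$-cohomology lies in the wrong parity. This is the heart of the proof and cannot be deferred. The paper supplies it via Lemmas~9.1 and~9.2 of Kottwitz \cite{K-SVLR}: if $\pi_\infty$ is cohomologically induced from a $\theta$-stable pair $(L,Q)$, then the bottom cohomological degree is $R=q(\G_\R)-q(L)$, and Kottwitz's Lemma~9.1 gives $(-1)^{q(L)}=\langle\lambda_{\pi_\infty},s_\psi\rangle$, while Lemma~9.2 shows $\lambda_{\pi_\infty}\langle\,.\,,\pi_\infty\rangle$ is independent of $\pi_\infty$ within the packet. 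Combined with Arthur's multiplicity formula (which pins down $\langle\,.\,,\pi_\infty\rangle|_{\Sgoth_\psi}$ once $\pi_f$ and $m(\pi)\neq 0$ are fixed) and condition (C)(ii), this forces the parity of $R$ to depend only on $\pi_f$. Without this chain, the parity statement is unproven, not merely ``delicate.''

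A second gap concerns the descent from (C) to (C\'{}). You propose to transfer parameters via the dual map ${}^L\G\to{}^L\G'$. The paper instead passes to \emph{connected} Shimura (locally symmetric) varieties, using that the projective system of connected varieties depends only on the triple $(\G^{\ad},\G^{\der},\X^+)$ (\cite{De-VS} 2.1.2, 2.1.7, 2.1.8); since $\G'\supset\G^{\der}$, the connected varieties attached to $\G$ and to $\G'$ coincide, and one applies Theorem~\ref{thC} to $\G'$ directly. Your $L$-group transfer is plausible, but it is not worked out, and it is not what the paper does.

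Two smaller points: (i) the Hecke element separating the parities is obtained from Jacobson's density theorem applied to the finite set of inequivalent $\Hecke_\K$-modules $\pi_f^\K$ that actually appear at level $\K$; strong multiplicity one enters only in justifying condition (C)(ii), not in producing the projector. (ii) Wildeshaus does not construct a Hecke \emph{algebra} action on $IM(S^\K,W)$ as a Chow motive, only individual endomorphisms $\widetilde{KgK}$; so one cannot literally speak of an idempotent in a motivic Hecke action. The paper forms the linear combination $\sum c^\pm_g\,\widetilde{KgK}$ in $M_{rat}(\C)_F$ and then passes to $M_{hom}$, where the Betti realization shows it is the desired projector — this suffices for the sign conjecture, which is a statement about $M_{hom}$.

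Finally, the deduction also needs the reduction steps the paper records (change of base field, connected components, raising the level), all of which are omitted in your proposal but are routine.
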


We actually have versions of these two theorems for motives 
with coefficients in smooth motives (whose Betti realizations
are automorphic local systems), see Theorem \ref{th-sign-int-coeff}.

We will deduce Theorems \ref{thA} and \ref{thB} from another result that we
need some more notation to state. Let $\G$ be a connected reductive group over $\Q$,
$\Ar_\G$ the maximal $\Q$-split torus in the center of $\G$, $\K_\infty$ a maximal
compact subgroup of $\G(\R)$, $\K_\infty'=\Ar_G(\R)^\circ\K_\infty$, and $\X=\G(\R)/\K'_\infty$.
We assume that $\X$ is a Hermitian symmetric domain; 
this is satisfied by the group $\G$ in any Shimura data,
and also by any subgroup thereof as in condition (C\'{}).

With the notation and under this assumption, the double coset space
\[
S^\K=\G(\Q)\sous(\X\times\G(\Ade_f)/\K)
\]
still makes sense for open compact subgroups $\K$ of $\G(\Ade_f)$, 
and is a finite disjoint union of quotients of Hermitian symmetric domains
by arithmetic subgroups of $\G(\Q)$. In particular, it is a disjoint union 
of locally symmetric Riemannian manifolds if $\K$ is neat. 
Moreover, by a theorem of Baily and Borel (cf \cite{BB} Theorem 10.11), 
$S^\K$ is a quasi-projective complex algebraic variety, smooth if $\K$ is neat.

Let $\Hecke_\K=C_c^\infty(\K\sous\G(\Af)/\K,\Q)$ be the algebra 
of functions $\K\sous\G(\Af)/\K\fl\Q$ that are locally constant and have compact support,
with multiplication given by the convolution product. 

Let $j:S^\K\fl\overline{S}^\K$ be the
embedding of $S^\K$ in its minimal compactification. Let $W$ be an
irreducible algebraic representation of $\G$ defined over a field $F$,
and denote by $\F_W$ the associated $F$-local system on $S^\K$ (cf \cite{KR}, p 113).
Let $d$ be the dimension of $S^\K$ (as an algebraic variety).
The \emph{intersection complex} of $\overline{S}^\K$
with coefficients in $\F_W$ (or $W$) is the complex
\[\IC^\K_W:=(j_{!*}(\F_W[d]))[-d].\]
The \emph{intersection cohomology} of $\overline{S}^\K$ with coefficients in
$\F_W$ (or $W$) is
\[\IH^*(S^\K,W):=\H^*(\overline{S}^K,\IC^\K_W).\]
It admits an $F$-linear action of $\Hecke_\K \otimes_{\Q} F$ (cf \cite{KR} p 122-123).

We write
\begin{equation}
\label{eqn-Hecke-dec}\tag{1}
\IH^i(S^\K,W)\otimes_F \C=\bigoplus_{\pi_f}\pi_f^{\K}\otimes\sigma^i(\pi_f),
\end{equation}
where the sum is over all irreducible admissible representations
$\pi_f$ of $\G(\Af)$, $\pi_f^\K$ is the space
of $\K$-invariant vectors in $\pi_f$ (a representation of
$\Hecke_K\otimes\C$) and the $\sigma^i(\pi_f)$ are finite-dimensional
$\C$-vector spaces. Then :

\begin{theorem}\label{thC} Assume that $\G$ satisfies (C) and let $\pi_f$ be as above. 
Then, either $\sigma^i(\pi_f)=0$ for every $i$ even, 
or $\sigma^i(\pi_f)=0$ for every $i$ odd.

\end{theorem}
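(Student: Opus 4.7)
The plan is to compute $\sigma^i(\pi_f)$ explicitly, by combining Zucker's conjecture, the $L^2$-decomposition of Matsushima--Borel--Casselman, and the input from Arthur's conjectures packaged in condition~(C), and then to read off the parity of the degrees in which it is nonzero.

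First I would identify $\IH^*(S^\K,W)\otimes_F\C$ with the $L^2$-cohomology $\H^*_{(2)}(S^\K,\F_W\otimes\C)$ via Zucker's conjecture (a theorem of Looijenga and of Saper--Stern). The Matsushima--Borel--Casselman decomposition then yields, on the $\pi_f$-isotypic piece,
\[
\sigma^i(\pi_f) \;=\; \bigoplus_{\pi_\infty} m(\pi_f\otimes\pi_\infty)\,\H^i(\ggoth,\K'_\infty;\pi_\infty\otimes W^*),
\]
where $\pi_\infty$ ranges over irreducible unitary representations of $\G(\R)$ and $m(\pi_f\otimes\pi_\infty)$ denotes the multiplicity in the discrete automorphic spectrum. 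In particular the problem is archimedean: understand the parity of the degrees in which the above $(\ggoth,\K'_\infty)$-cohomology is nonzero, summed over all $\pi_\infty$ contributing.

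Next I would apply condition~(C): part~(i) decomposes the discrete spectrum along global $A$-parameters $\psi$; part~(ii) ensures that $\pi_f$ determines~$\psi$, so all $\pi_\infty$ surviving above lie in a single archimedean packet $\Pi_{\psi_\infty}$; part~(iii) identifies $\Pi_{\psi_\infty}$ with an Adams--Johnson packet, whose members take the form $A_{\goth{q}_j}(\lambda)$ for a family of $\theta$-stable parabolics $\goth{q}_j=\goth{l}\oplus\goth{u}_j$ sharing a common Levi $\Le$. The Vogan--Zuckerman description of $(\ggoth,\K'_\infty)$-cohomology then gives explicit formulas for $\H^i(\ggoth,\K'_\infty;A_{\goth{q}_j}(\lambda)\otimes W^*)$, expressing it as the cohomology of a certain compact subspace attached to $\Le$ and placing it in degrees $R(\goth{q}_j)\le i\le R(\goth{q}_j)+\dim(\goth{l}\cap\goth{p})$, where $R(\goth{q}_j):=\dim(\goth{u}_j\cap\goth{p})$.

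The main obstacle is the final step: verifying that all degrees $i$ thus appearing have the same parity as $j$ varies over the Adams--Johnson packet. I expect this to rest on two features peculiar to the Hermitian symmetric setting. First, the compact dual of the Hermitian symmetric space attached to $\Le$ is a generalized flag variety; its Bruhat cell decomposition is by cells of even real dimension, so the Vogan--Zuckerman cohomology of each $A_{\goth{q}_j}(\lambda)$ lives, in total degree, only in degrees of the same parity as the shift $R(\goth{q}_j)$. Second, the shifts $R(\goth{q}_j)$ themselves share a common parity across the packet, a fact which reflects the compatibility between Adams--Johnson's construction and the Hodge decomposition $\goth{p}_\C=\goth{p}^+\oplus\goth{p}^-$; this is the delicate point, requiring a close look at how the parabolics $\goth{q}_j$ are generated from $\psi_\infty$. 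Together, these two observations force $\sigma^i(\pi_f)$ to vanish either for all even $i$ or for all odd~$i$.
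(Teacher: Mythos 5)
Your framework — Zucker's conjecture, the Matsushima/Borel--Casselman decomposition, reduction via condition~(C) to a single Adams--Johnson packet at infinity, and the Vogan--Zuckerman fact that $\H^*(\ggoth,\K'_\infty;A_{\goth{q}}(\lambda)\otimes W)$ lives only in degrees of the form $R+2\Nat$ with $R=\dim_\C(\goth u\cap\goth p)$ — is exactly the paper's framework. But the step you flag as ``the delicate point'' is where the proof actually lives, and the way you phrase it is not just unproven but, as stated, false. You assert that the shifts $R(\goth q_j)$ share a common parity across the whole Adams--Johnson packet $\Pi_{\psi_\infty}$, as a purely archimedean fact about how the $\theta$-stable parabolics are generated from $\psi_\infty$. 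This cannot be right in general for non-tempered $\psi$: the paper (quoting Kottwitz's Lemma~9.1) gives the identity $(-1)^{q(L)}=\langle\lambda_{\pi_\infty},s_\psi\rangle$, and the character $\lambda_{\pi_\infty}$ genuinely varies over the packet (that is the content of Kottwitz's Lemma~9.2, which says the \emph{product} $\lambda_{\pi_\infty}\langle\cdot,\pi_\infty\rangle$ is the quantity that is constant). So when $s_\psi$ is nontrivial, different members of $\Pi_{\psi_\infty}$ can have opposite parities of $q(L)$, hence of $R$. (For tempered $\psi$ the issue is vacuous since $s_\psi=1$ and $R=q(\G_\R)$ for all discrete series.)

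What the paper actually proves — and what your outline is missing — is that $R$ has constant parity not over the full packet $\Pi_{\psi_\infty}$ but over the subset $\Pi_\infty(\pi_f)$ of those $\pi_\infty$ which pair with $\pi_f$ to give a nonzero discrete automorphic multiplicity. This is a \emph{global} constraint: Arthur's multiplicity formula $m(\psi,\pi)=|\Sgoth_\psi|^{-1}\sum_{x}\varepsilon_\psi(x)\langle x,\pi\rangle$ together with the factorization $\langle\cdot,\pi\rangle=\langle\cdot,\pi_f\rangle\langle\cdot,\pi_\infty\rangle$ forces $\langle\cdot,\pi_\infty\rangle|_{\Sgoth_\psi}$ to equal $\varepsilon_\psi\langle\cdot,\pi_f\rangle^{-1}$ for every $\pi_\infty$ with $m(\pi_f\otimes\pi_\infty)\ne 0$, hence this restriction is determined by $\pi_f$. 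Feeding this into Kottwitz's Lemma~9.2 shows $\lambda_{\pi_\infty}|_{\Sgoth_\psi}$ is determined by $\pi_f$, and then Lemma~9.1 gives $(-1)^{q(L)}=\lambda_{\pi_\infty}(s_\psi)$ is determined by $\pi_f$, i.e.\ $R=q(\G_\R)-q(L)$ has constant parity over $\Pi_\infty(\pi_f)$. So the heart of the proof is the interplay between the global multiplicity formula and the two character identities of Kottwitz relating $q(L)$ and the packet characters to $s_\psi$; the Hermitian structure and the even-dimensionality of Bruhat cells that you invoke are not what is doing the work.
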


\begin{remark} In general, for $\pi_f$ fixed, there can be several
degrees $i$ with $\sigma^i(\pi_f)\not=0$ (as is clear on the formula for
$\IH^i$ in section \ref{proof}).
Hence the methods of this paper cannot be used to prove the full
Künneth conjecture.

This is also clear from the fact that the Lefschetz operator on $\IH^*$
commutes with the action of the Hecke operators. Note that
the action of $\C^\times$ on the $\IH^i$ that gives the pure Hodge structure
(whose existence follows from M. Saito's theory of mixed Hodge modules)
also commutes with the action of the Hecke operators (and with the
Lefschetz operator).
See page 8 of Arthur's review paper \cite{A-L2bis} for a more
precise version of these two statements.

\end{remark}


Here is the present state of knowledge about condition (C) :
\begin{itemize}

\item[(i)] Arthur's conjectures (with substitute parameters) are known
for split symplectic and quasi-split special orthogonal groups, by the book
\cite{A-livre} of Arthur, modulo the stabilization of the twisted trace
formula and a local theorem at the archimedean place (see the end of the
introduction of \cite{A-livre}). They are also known for quasi-split
unitary groups by work of Mok (\cite{Mok}) and for their inner forms by work
of Kaletha-Minguez-Shin-White (\cite{KMSW}), 
modulo the same hypotheses. Finally, still assuming the same hypotheses,
the conjectures are known for tempered representations of
split general symplectic and quasi-split
general orthogonal groups, by work of Bin Xu
(\cite{Xu}).\footnote{Note that we only need condition (C') for theorem
\ref{thB}, so Arthur's results already allow us to get theorem \ref{thB} for
the Shimura varieties of split general symplectic groups.}

\item[(ii)] This condition, in the cases where
Arthur's conjectures are (almost) known, follows easily from strong
multiplicity one for the groups $\GL_n$.

\item[(iii)] The agreement of the classifications of Arthur and Adams-Johnson 
for cohomological representations of $\G(\R)$ is still open, though
it should be accessible.

\end{itemize}

\begin{remark} In the case of even special orthogonal groups,
Arthur's methods don't
allow to distinguish between a representation and its
conjugate under the orthogonal
group. This doesn't affect the methods of this paper and thus is not a problem
for us, see the end of
section \ref{proof}.

\end{remark}

In the final section, we discuss the possibility of using finite correspondences
to attack the standard K\"unneth (or sign) conjecture for general projective smooth varieties.

\paragraph*{Acknowledgments}

We thank Pierre Deligne for helpful discussions, especially regarding the material on finite correspondences in the final section. We also thank Colette Moeglin
for pointing out some misconceptions about Arthur's conjectures in a
previous version of this article.

\section{Reduction to Theorem \ref{thC}}

First, we review the motivic constructions of coefficient systems
and intersection motives, by Ancona and Wildeshaus.
This will allow us to state Theorem \ref{th-sign-int-coeff} with
coefficients, which, together with Theorem \ref{thC},
implies both \ref{thA} and \ref{thB}. 
We then make certain reduction steps necessary for passing from
PEL Shimura varieties to the associated connected Shimura varieties. 
Finally we prove Theorem \ref{th-sign-int-coeff}, modulo Theorem \ref{thC}.


\subsection{Review of motivic constructions}
Let $F$ be a number field and let $W$ be a finite dimensional
algebraic representation of $\G_F$.

One applies Th\'eor\`eme 4.7 and Remarque 4.8 of \cite{Anc} 
to get a Chow motive $\tilde{\mu}(W)$ over $S^{\K}$, 
whose Betti realization over $S^{\K}(\C)$ is 
the local system corresponding to $W$. 
By construction, it is a direct sum of Tate twists 
of direct summands in the motives
$$
\pi^{(r)}_{\ast} \mathbf{1}_{A^r}
$$
where $r\ge 0$ and $\pi^{(r)}: A^r \fl S^{\K}$ is the $r$th power 
of the Kuga-Sato abelian scheme.

Then one applies\footnote{Wildeshaus' construction requires a condition, which he names (+)
and is the same as (3.1.5) in \cite{P2}, on the central torus in the Shimura data. 
It is satisfied by any PEL Shimura data: See for instance 
the analysis of the maximal torus quotient of $\G$ in \S 7 of \cite{K-PSSV}.}
the main result of \cite{Wil} (Theorems 0.1 and 0.2 
and Corollary 0.3) to obtain the intermediate extension $j_{!\ast} \tilde{\mu}(W)$
on the minimal compactification $\overline{S}^{\K}$, whose Betti
realization is naturally isomorphic to the intersection complex.

Finally, by taking the direct image of $j_{!\ast} \tilde{\mu}(W)$ 
under the structure morphism $m:\overline{S}^{\K} \fl \Spec k$, 
one gets the intersection motive $IM(S^K, W)$ 
whose Betti realization is canonically isomorphic 
to the intersection cohomology $\IH^{\ast}(S^K(\C), W)$.
\footnote{Strictly speaking, Wildeshaus' construction works only for
direct factors $N$ of $\pi^{(r)}_{\ast} \mathbf{1}_{A^r}$. Given a
Tate twist $N(m)$, one takes the $m$th Tate twist of $IM(S^{\K}, N)$,
which has Betti realization $\IH^{\ast}(S^K(\C), N(m))$.}

Also constructed in \cite{Wil} (Theorem 0.5) is an endomorphism $KgK$ of 
$IM(S^{\K}, W)$, for each double coset $KgK \in \K\sous\G(\Af)/\K$, whose
Betti realization coincides with the usual action of the Hecke operator
for the coset on the intersection cohomology. The construction uses the
compatibility of the motivic middle extension and the ``change of level''
maps $[h\cdot\;]_{\ast}$, see Theorem 0.4 of \cite{Wil}.

To avoid possible confusion, we will use the notation $\widetilde{KgK}$
for the endomorphisms of $IM(S^{\K}, W)$.\footnote{Wildeshaus does not 
construct an action of the Hecke algebra on $IM(S^{\K}, W)$. 
One expects, but does not know at the moment,
that there is a canonical choice of $\widetilde{KgK}$.}

\subsection{Sign conjecture with coefficients}
Now we are ready to state a version with coefficients. 

Let $W$ be an irreducible algebraic representation of $\G$ over a number field $F$.
We denote by $\IH^{+}$ (resp. $\IH^{-}$) the direct sum of $\IH^i(S^{\K}(\C), W)$
for $i$ even (resp. odd), and denote by $p_W^{+} = p_{W, F}^{+}$ 
(resp. $p_W^{-} = p_{W, F}^{-}$) the corresponding projector 
on $\IH^{\ast}(S^{\K}(\C), W)$.

\begin{theorem}
\label{th-sign-int-coeff}
Assume that the group $\G$ in the simple PEL data satisfies condition (C\'{}), 
so that there exists a subgroup $\G'$ which contains $\G^{\der}$ and satisfies condition (C).

Then there exists an endomorphism $\tilde{p}_W^{+}$ (resp. $\tilde{p}_W^{-}$)
of the image of $IM(S^{\K}, W)$ in $M_{hom}(k)_F$ 
whose Betti realization is $p_W^{+}$ (resp. $p_W^{-}$). 
It follows that the image of $IM(S^{\K}, W)$ in $M_{hom}(k)_F$ admits a decomposition
$$
IM(S^{\K}, W)_{hom} = IM(S^{\K}, W)_{hom}^{+} \oplus IM(S^{\K}, W)_{hom}^{-}
$$
such that $\H^{\ast} (IM(S^{\K}, W)_{hom}^{+}) = \IH^{+}$
and $\H^{\ast} (IM(S^{\K}, W)_{hom}^{-}) = \IH^{-}$ are concentrated
in even and odd degrees, respectively.
\end{theorem}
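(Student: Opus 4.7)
The plan is to deduce Theorem \ref{th-sign-int-coeff} from Theorem \ref{thC}. The even and odd K\"unneth projectors on $\IH^\ast(S^\K(\C), W)$ will be shown to lie in the image of the Hecke algebra $\Hecke_\K \otimes_\Q F$, and then lifted to motivic endomorphisms of $IM(S^\K, W)$ via Wildeshaus' motivic Hecke correspondences $\widetilde{\K g \K}$.

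Assuming Theorem \ref{thC} applies, let $A \subset \End_F \IH^\ast(S^\K(\C), W)$ denote the image of $\Hecke_\K \otimes_\Q F$. After extension of scalars to $\C$, the decomposition (\ref{eqn-Hecke-dec}) identifies $A \otimes_F \C$ with $\prod_{\pi_f} \End_\C(\pi_f^\K)$, the product running over those $\pi_f$ that contribute. The projectors $p_W^\pm$ lie a priori in $\End_F \IH^\ast(S^\K(\C), W)$ because each $\IH^i$ is an $F$-subspace. Theorem \ref{thC} forces $p_W^\pm$, after base change to $\C$, to be precisely the sum of the central idempotents $\id_{\pi_f^\K}$ taken over the $\pi_f$ of the appropriate parity type, so $p_W^\pm \in A \otimes_F \C$. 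Combining the two containments, since $A$ is an $F$-form of $A \otimes_F \C$ inside $\End_F(\IH^\ast) \otimes_F \C$, an elementary descent gives $p_W^\pm \in A$. In particular there exist $h_W^\pm \in \Hecke_\K \otimes_\Q F$ acting on $\IH^\ast(S^\K(\C), W)$ as $p_W^\pm$.

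To finish, I write $h_W^\pm = \sum_g a_g^\pm \, [\K g \K]$ as a finite $F$-linear combination of double cosets and set
\[
\tilde{p}_W^\pm \;:=\; \sum_g a_g^\pm \, \widetilde{\K g \K} \;\in\; \End\bigl(IM(S^\K, W)_{hom}\bigr),
\]
the endomorphism being taken in $M_{hom}(k)_F$. Its Betti realization is $p_W^\pm$ by linearity and the defining property of $\widetilde{\K g \K}$. Since morphisms in $M_{hom}(k)_F$ are algebraic cycles modulo homological (i.e., Betti) equivalence, the idempotent and orthogonality relations $(\tilde{p}_W^\pm)^2 = \tilde{p}_W^\pm$, $\tilde{p}_W^+ \tilde{p}_W^- = 0$, and $\tilde{p}_W^+ + \tilde{p}_W^- = \id$, valid after Betti realization, hold automatically in $M_{hom}(k)_F$. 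This yields the desired decomposition of $IM(S^\K, W)_{hom}$.

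The main obstacle arises before any of this: we need Theorem \ref{thC} for $\G$ itself, whereas only a subgroup $\G'$ with $\G^{\der} \subset \G' \subset \G$ is assumed to satisfy condition (C). Exploiting that $\G'$ and $\G$ have the same derived group, the plan is to reduce to the connected components of $S^\K(\C)$, each of which is a quotient of the Hermitian symmetric domain $\X$ by an arithmetic subgroup of $\G^{\der}(\Q)$, to invoke Theorem \ref{thC} for $\G'$ on each component, and to transfer the resulting parity statement back to the $\pi_f$ appearing in (\ref{eqn-Hecke-dec}) for $\G$. Verifying that this connected-component reduction is compatible both with the Hecke action on $\IH^\ast$ and with Wildeshaus' motivic middle extension, so that the final lift by $\widetilde{\K g \K}$ remains available, is the principal technical point still to settle.
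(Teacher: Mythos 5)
Your central argument tracks the paper's proof (Lemma \ref{lem-hecke-elts} and the two lines that follow it) closely: use density to identify the image of $\Hecke_\K\otimes_\Q\C$ with $\prod_{\pi_f\in\Sigma}\End_\C(\pi_f^\K)$, invoke Theorem \ref{thC} to place $p_W^\pm\otimes 1_\C$ in that image as a sum of central idempotents, descend to $F$ by the elementary fact about images of linear maps under extension of scalars, and lift to motivic endomorphisms of $IM(S^\K,W)_{hom}$ via the $\widetilde{KgK}$. Your observation that idempotency and orthogonality of $\tilde p_W^{\pm}$ are then automatic, because the Betti realization is faithful on $M_{hom}(k)_F$, is also precisely what the paper uses.

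The issue you flag as still unsettled is a genuine gap in what you have written, and it is the reason \S\ref{sec-red-steps} precedes the proof in the paper. Theorem \ref{thC} applies only to a group satisfying condition (C), and under (C\'{}) that group is $\G'$ with $\G^{\der}\subset\G'\subset\G$, not $\G$ itself. The paper closes this as follows: by the reductions of \S\ref{sec-red-steps} one may assume $k=\C$, pass to a connected component of $S^\K$, and shrink the neat level at will; the projective system of connected locally symmetric varieties depends only on the triple $(\G^{\ad},\G^{\der},\X^{+})$ (\cite{De-VS} 2.1.2, 2.1.7, 2.1.8), and this triple is unchanged when $\G$ is replaced by $\G'$, so the coefficient systems, the intersection motive and the Hecke correspondences transfer to the $\G'$ side, where Theorem \ref{thC} applies. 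The compatibility with Wildeshaus' middle extension that you worry about is exactly what the level-raising step of \S\ref{sec-red-steps} delivers: $IM(S^{\K},W)$ is a direct factor of $IM(S^{\K'},W)$ in $M_{hom}(\C)_F$ for any smaller neat $\K'$, via $[1\cdot]_{\ast}$ and the trace. So your plan is the right one; what remains is to execute it, and that execution is the content of \S\ref{sec-red-steps} together with the opening lines of the paper's \S 2.4.
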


\subsection{Reduction steps}
\label{sec-red-steps}

\textit{Change of base field}: 
First, we reduce to the case where $k=\C$. 
For this, note that the vector spaces of algebraic correspondences modulo 
an adequate equivalence that is coarser than the algebraic equivalence 
(in particular the homological equivalence) are invariant 
under the change of ground field from $\bar{k}$ to $\C$. 
Then use the fact that the even and odd projectors are 
invariant under the action of $\Gal(\bar{k}/k)$.

\noindent \textit{Connected components}: 
As the minimal compactification $\overline{S}^{\K}$ is normal by construction,
its connected components and irreducible components coincide. 
Thus the even projector for $S^{\K}$ is the sum of
the even projectors for the connected components, and the sign conjecture is true
for $S^{\K}$ and $W$ iff it is true for each of its connected components.

\noindent \textit{Raising the level}:
\label{ssec-level}
Finally, we may pass from a neat level subgroup $\K$ to any level subgroup
$\K'\subset\K$.
It suffices to show that $IM(S^{\K}, W)$ is a direct factor 
of $IM(S^{K'},W)$ in $M_{\hom}(\C)_F$.
Take a connected component $\overline{X}$ of $\overline{S}^{\K}$, 
with $X = \overline{X} \cap S^{\K}$. The change of level map 
$f:=[1\cdot]: S^{\K'} \fl S^{\K}$ is a finite \'etale surjection that extends to
a finite surjection $\overline{f}: \overline{S}^{\K'} \fl \overline{S}^{\K}$. 
Let $\overline{Y}$ be the inverse image $\overline{f}^{-1}(\overline{X})$ and 
$Y := \overline{Y} \cap S^{\K'} = f^{-1} (X)$.

Over $X$ we have the adjunction map for direct image and the trace map:
$$
\adj_f: \F_W \fl f_{\ast} \F_W \,\, \mbox{ and } \,\,
\Tr_f: f_{\ast} \F_W \fl \F_W.
$$
Since $\overline{f}$ is finite, these maps extend to
$$
\overline{\adj_f} : j_{!*} \F_W \fl \overline{f}_{\ast} (j_{!*} \F_W) \,\, \mbox{ and } \,\,
\overline{\Tr_f}: \overline{f}_{\ast} ( j_{!*}\F_W )\fl j_{!*}\F_W
$$
(here $j_{!*} \F_W$ means $j_{!*}(\F_W [\dim X])[-\dim X]$). In Betti cohomology, these maps give rise to
$$
\xymatrix{
\IH^i (S^{\K}, W) \ar[r]^-{\adj} & \IH^i (S^{\K'}, W) \ar[r]^-{\Tr} & \IH^i (S^{\K}, W)
}
$$
\begin{lemma2}
The composite map is equal to multiplication by $\deg(f)$.
\end{lemma2}
\begin{proof}
We may replace $\overline{Y}$ (hence $Y$) with any connected component, denoting 
the restriction of $\overline{f}$ (also $f$) by the same letter. It suffices to show that
$$
\overline{\Tr_f} \circ \overline{\adj_f} = \deg(f) \;\; \mbox{ on } j_{!*}\F_W.
$$
By construction $\F = \F_W$ is a semisimple local system,
and we may replace it with a direct summand and assume it is irreducible.
Then $j_{!*} \F$ is a simple perverse sheaf
(\textit{cf.} Th\'eor\`eme 4.3.1(ii) of \cite{BBD}),
and it suffices to show the equality over the dense open subset $X$.
This last follows from Th\'eor\`eme 2.9 (Var 4) (I), expos\'e XVIII, SGA4.
\end{proof}

As we have recalled, Wildeshaus constructs $[1\cdot]_{\ast}$ between intersection motives;
see also the construction leading up to Corollary 8.8 in \cite{Wil}.
Thus $IM(S^{\K}, \F_W)$ is a direct factor of $IM(S^{\K'}, \F_W)$,
modulo homological equivalence.

\subsection{Proof of Theorem \ref{th-sign-int-coeff} modulo Theorem \ref{thC}}

By the previous reduction steps, we may pass to the connected Shimura varieties,
see 2.1.2, 2.1.7 and 2.1.8 in \cite{De-VS}: The projective system of connected
locally symmetric varieties depend only on the triple $(\G^{\ad}, \G^{\der}, \X^{+})$.
The motivic constructions of the coefficient systems and the intersection cohomology
can be therefore transferred to the connected locally symmetric varieties attached to
the subgroup $\G'$, for all small enough level subgroups.

\begin{lemma}
\label{lem-hecke-elts}
If Theorem \ref{thC} is true for $W$, then there exist elements
$$
h^{\pm}_{W, F} \in \Hecke_{\K} \otimes_{\Q} F
$$
which act as $p^{\pm}_W$ on $\IH^{\ast}(S^{\K}(\C), W)$.
\end{lemma}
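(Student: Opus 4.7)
The plan is to combine the Hecke-isotypic decomposition \eqref{eqn-Hecke-dec} with Theorem \ref{thC} to identify the parity projectors with elements in the image of the Hecke algebra, and then to descend from $\C$ to $F$ by a straightforward linear algebra argument. Concretely, I would begin by applying Theorem \ref{thC}, which partitions the irreducible admissible representations $\pi_f$ of $\G(\Af)$ with $\pi_f^\K \neq 0$ appearing in \eqref{eqn-Hecke-dec} into two disjoint sets $\Sigma^+$ and $\Sigma^-$, according to whether the nonzero $\sigma^i(\pi_f)$ are concentrated in even or odd degrees. On the $\pi_f^\K$-isotypic component $\pi_f^\K \otimes \bigoplus_i \sigma^i(\pi_f)$ of $\IH^*(S^\K(\C), W) \otimes_F \C$, the projector $p_W^+$ (resp. $p_W^-$) then acts as the identity when $\pi_f \in \Sigma^+$ (resp. $\Sigma^-$) and as zero otherwise; thus $p_W^\pm$ is the sum of the projections onto the isotypic components indexed by $\Sigma^\pm$.

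Next, I would invoke the standard bijection (Borel, Bernstein) between irreducible admissible representations of $\G(\Af)$ with nonzero $\K$-fixed vectors and irreducible $\Hecke_\K \otimes \C$-modules to conclude that the $\pi_f^\K$ appearing in \eqref{eqn-Hecke-dec} are pairwise non-isomorphic as $\Hecke_\K \otimes \C$-modules. Then $\IH^*(S^\K(\C), W) \otimes_F \C$ is semisimple with distinct simple isotypic components, and the Jacobson density theorem identifies the image of $\Hecke_\K \otimes_\Q \C$ in $\End_\C(\IH^*(S^\K(\C), W) \otimes_F \C)$ with $\bigoplus_{\pi_f} \End_\C(\pi_f^\K) \otimes \id$, where the identity acts on the multiplicity space $\bigoplus_i \sigma^i(\pi_f)$. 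This image contains every projection onto an isotypic component, and therefore contains $p_W^+$ and $p_W^-$.

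Finally, I would descend from $\C$ to $F$. Let $A_F$ denote the image of $\Hecke_\K \otimes_\Q F$ in the finite-dimensional $F$-algebra $\End_F(\IH^*(S^\K(\C), W))$, so that $A_F \otimes_F \C$ equals the corresponding image $A_\C$ inside $\End_\C$. A basis argument (faithfully flat descent of $F$-vector subspaces) gives $A_F = A_\C \cap \End_F(\IH^*(S^\K(\C), W))$ inside $\End_\C$. Since $p_W^\pm \in A_\C$ by the previous step and $p_W^\pm$ is already $\Q$-rational, being the projection onto a graded piece of the integral cohomological grading, we deduce $p_W^\pm \in A_F$. Any lift to the Hecke algebra then yields the desired elements $h_{W,F}^\pm$. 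The only substantive input is the semisimple, multiplicity-free structure of $\IH^*(S^\K(\C), W) \otimes_F \C$ as a $\Hecke_\K \otimes \C$-module, encoded by \eqref{eqn-Hecke-dec} and the Borel--Bernstein bijection; once this is in hand the density and descent steps are both formal, and I do not expect any genuine obstacle.
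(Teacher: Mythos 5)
Your proposal follows essentially the same route as the paper: it uses the multiplicity-free semisimple decomposition \eqref{eqn-Hecke-dec} over $\C$ together with the Jacobson density theorem to realize the parity projectors as elements of $\Hecke_\K\otimes_\Q\C$, and then descends to $F$ by a linear-algebra argument on images of $F$-linear maps under the flat base change $F\hookrightarrow\C$. The only cosmetic differences are that you phrase the descent via $A_F = A_\C\cap\End_F(\IH^*)$ rather than the paper's ``$p\in\operatorname{im}(f)$ iff $p\otimes 1\in\operatorname{im}(f\otimes 1)$'', and that you explicitly name the Borel--Bernstein bijection where the paper simply asserts that the $\pi_f^\K$ are irreducible and pairwise inequivalent; both are standard and equivalent.
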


\begin{proofarg}{of Lemma}
First we prove the statement over the field of coefficients $F'=\C$.
Let $\Sigma$ be the finite set consisting of the irreducible
admissible representations $\pi_f$ that have nonzero contribution 
to the right hand side of (\ref{eqn-Hecke-dec}) for some $i$.
Then as representations of the Hecke algebra $\Hecke_K\otimes_{\Q} \C$,
$( \pi_f^K )_{\pi_f \in \Sigma}$ are irreducible and pairwise inequivalent.
By Jacobson's density theorem, 
for each $\pi_f \in \Sigma$
there exists an element $h_{\pi_f} \in \Hecke_K \otimes_{\Q} \C$
that acts as $1$ on $\pi_f^K$ and as $0$ on $\pi'^K_f$ 
for every other $\pi'_f$ in $\Sigma$.

By Theorem \ref{thC}, $\Sigma$ is the disjoint union of
two subsets $\Sigma^{\pm}$, consisting of those $\pi_f \in \Sigma$ 
that have contribution in even or odd degrees, respectively.
Therefore
$$
h^{\pm}_{W, \C} = \sum_{\pi_f \in \Sigma^{\pm}} h_{\pi_f} \in \Hecke_{\K} \otimes_{\Q} \C
$$
acts as $p^{\pm}_{W, \C} = p^{\pm}_{W, F} \otimes_F 1_{\C}$.

To conclude the proof, use the fact: If $f: H \fl E$ is
an $F$-linear map of $F$-vector spaces and $F'$ is an extension field of $F$,
then an element $p\in E$ lies in the image of $f$ iff $p\otimes 1_{F'}$
is in the image of $f\otimes_F 1_{F'}$.
\end{proofarg}

Now Theorem \ref{th-sign-int-coeff} follows easily from the lemma: Writing
$$
h^{\pm}_{W, F} = \sum_{g \in K \sous \G(\Af)/\K} c^{\pm}_g \; [1_{KgK}], \;\; c^{\pm}_g \in F
$$
the endomorphism of $IM(S^{\K}, W)$ in $M_{rat}(\C)_F$ and also its image in $M_{hom}(\C)_F$
$$
\tilde{p}^{\pm}_W := \sum c^{\pm}_g \; \widetilde{KgK}
$$
has Betti realization $p^{\pm}_W$. 

In the case $W$ is the trivial representation defined over $\Q$,
we get Theorems \ref{thA} and \ref{thB}.

\begin{remark}
We have focused on PEL Shimura varieties, in order to apply the known constructions.
However, the deduction via Lemma \ref{lem-hecke-elts} of the sign conjecture 
from Theorem \ref{thC} is valid for more general varieties.

First, for the trivial coefficient system, the motivic construction 
of coefficient systems is unnecessary, and we do not need to restrict to PEL types.

 Then for compact Shimura varieties 
(that is, in case the group is anisotropic over $\Q$ modulo center),
we do not need Wildeshaus' construction of intersection motives,
and the Shimura data do not need to satisfy his condition (+)
on its central torus.

 Over the complex numbers (or even over $\oQ$, see \cite{Fal}),
the sign conjecture can be verified for the locally symmetric varieties 
considered in Theorem \ref{thC}. Through the work of Shimura, Deligne, 
Milne, and others we have a complete theory of canonical models 
of Shimura varieties over reflex fields, 
and the sign conjecture holds for these models.

 Finally, if we know the sign conjecture for the varieties attached to
a $\Q$-anisotropic semisimple group $\G$, we also know it for the varieties 
attached to any isogenous quotient group of $\G$. 
For any variety of the latter kind admits a finite \'etale covering
from a variety of the former kind, and we can apply 
an argument similar to the one in \ref{ssec-level}.

Of course, Theorem \ref{thC} is essential in all these generalizations.

\end{remark}

\section{Arthur's conjectures}
\label{L2A}

We follow the presentation of Kottwitz in section 8 of \cite{K-SVLR}.
As before, $\G$ is a connected
reductive group over $\Q$.

Let $\xi:\Ar_\G(\R)^\circ\fl\C^\times$ be a character of $\Ar_\G(\R)^\circ$.
Let $L^2_\G$ be the space of functions
$f:\G(\Q)\sous\G(\Ade)\fl\C$ such that :
\begin{itemize}
\item $f(zg)=\xi(z)f(g)$ $\forall z\in\Ar_\G(\R)^\circ,g\in\G(\Ade)$;
\item $f$ is square-integrable modulo $\Ar_\G(\R)^\circ$.

\end{itemize}
(Cf. the beginning of section 2 of \cite{A-L2}.)

Then $\G(\Ade)$ acts on $L^2_\G$ by right multiplication on the argument of
the function. We say that an irreducible representation $\pi$ of $\G(\Ade)$ is
\emph{discrete automorphic} if it appears as a direct summand in the
representation $L^2_\G$. In that case, we write $m(\pi)$ for the
multiplicity of $\pi$ in $L^2_\G$; it is known to be finite.
We denote by $\Pi_{disc}(\G)$ the set of equivalence classes of
discrete automorphic representations of $\G(\Ade)$ and by $L^2_{\G,disc}$
the discrete part of $L^2_\G$ (ie the completed direct sum of the
isotypical components of the $\pi\in\Pi_{disc}(\G)$).

Arthur conjectured that 
\[L^2_{\G,disc}\simeq\bigoplus_\psi\bigoplus_{\Pi_\psi}m(\psi,\pi)\pi,\]
where the $\psi$ are equivalence classes of global Arthur parameters,
the $\Pi_\psi$ are sets of (isomorphism classes of) smooth admissible
representations of $\G(\Ade)$
called Arthur packets
and $m(\psi,\pi)$ are nonnegative integers that we will define later.
Note that we are not saying that the representations $\pi$ are irreducible.
(They are not in general.)

The traditional statement of Arthur's conjectures involves the conjectural
Langlands group $\Lf_\Q$ of $\Q$, and Arthur parameters are morphisms
$\Lf_\Q\times\SL_2(\C)\fl{}^L\G$, where ${}^L\G=\widehat{\G}\rtimes W_\Q$ is
the Langlands dual group of $\G$. In some cases, it is possible to use instead
substitute parameters defined in terms of cuspidal automorphic representations
of general linear groups. This is the point of view that is taken in the
proofs of Arthur's conjectures by Arthur for symplectic and orthogonal groups
(cf \cite{A-livre}) and by Mok in the case of quasi-split unitary groups
(cf \cite{Mok}). In any case, a global Arthur parameter $\psi$ gives rise to :
\begin{itemize}
\item a character $\xi_\psi:\Ar_\G(\R)^\circ\fl\C^\times$;
\item a reductive subgroup $S_\psi$ of $\widehat{\G}$ such that
$S_\psi^\circ\subset Z(\widehat{\G})^\Gamma\subset S_\psi$, where
$\Gamma=\Gal(\overline{\Q}/\Q)$;
\item a character $\varepsilon_\psi$ of the finite group
$\Sgoth_\psi:=S_\psi/Z(\widehat{\G})^\Gamma
S_\psi^\circ$ with values in $\{\pm 1\}$.

\end{itemize}
In the sum above, we only take the parameters $\psi$ such that $\xi_\psi=\xi$.

Part of Arthur's conjectures is that there should be a map $\pi^0\fle\langle .,
\pi^0\rangle$ from the set of isomorphism classes of irreducible constituents
of elements of
$\Pi_\psi$ to $\widehat{\Sgoth}_\psi$, such that $\langle .,\pi^0\rangle=
\langle .,\pi^1\rangle$ if $\pi^0$ and $\pi^1$ are two irreducible constituents
of the same $\pi\in\Pi_\psi$, and that the
multiplicity $m(\psi,\pi)$ is given by the following formula :
\[m(\psi,\pi)=m(\psi,\pi^0):=
|\Sgoth_\psi|^{-1}\sum_{x\in\Sgoth_\psi}\varepsilon_\psi(x)
\langle x,\pi^0\rangle,\]
if $\pi^0$ is an irreducible constituent of $\pi$.

We can now state part (ii) of condition (C). It says that, for every
irreducible admissible representation $\pi_f$ of $\G(\Af)$, there is
at most one Arthur parameter $\psi$ such that 
$\pi_f$ is the finite part of an irreducible constituent of an element of
$\Pi_\psi$.

There are also local versions of Arthur's conjectures involving local
Arthur parameters and local Arthur packets. We will not give details here
(see for example chapter I of Arthur's book \cite{A-livre}).

\section{Proof of Theorem \ref{thC}}
\label{proof}

We use the notation of Theorem \ref{thC} and of section \ref{L2A}, and
we take for $\xi:\Ar_\G(\R)^\circ\fl\C^\times$ the inverse of the
character by which $\Ar_\G(\R)^\circ$ acts on $W(\R)$.

If $\pi$ is
an irreducible representation of $\G(\Ade)$, we can write $\pi=\pi_f\otimes
\pi_\infty$, where $\pi_f$ (resp. $\pi_\infty$) is an irreducible representation
of $\G(\Ade_f)$ (resp. $\G(\R)$).

Let $\ggoth$ be the complexified
Lie algebra of $\G(\R)$. If $\pi_\infty$ is an
irreducible representation of $\G(\R)$, we write $\H^*(\ggoth,\K'_\infty;
\pi_\infty\otimes W)$ for the $(\ggoth,\K'_\infty)$-cohomology of the space
of $\K'_\infty$-finite vectors in $\pi_\infty\otimes W$ (cf chapter I of
\cite{BW}).

It follows from Zucker's conjecture (a theorem of Looijenga (\cite{Lo}),
Saper-Stern (\cite{SS}) and Looijenga-Rapoport(\cite{LR})) and from
Matsushima's formula (proved by Matsushima for $S^\K$ compact and
by Borel and Casselman in the general case, cf Theorem 4.5 of
\cite{BC}) that there is a $\Hecke_\K\otimes\C$-equivariant isomorphism,
for every $k\in\Z$,
\[\IH^k(S^\K,W)\simeq\bigoplus_{\pi\in\Pi_{disc}(\G)}\pi_f^{\K}\otimes
\H^k(\ggoth,\K'_\infty;\pi_\infty\otimes W)^{m(\pi)}\]
(see also (2.2) of Arthur's article \cite{A-L2}).

If $\pi_f$ is an irreducible representation of $\G(\Ade_f)$, let $\Pi_\infty
(\pi_f)$ be the set of equivalence classes of irreducible representations
$\pi_\infty$ of $\G(\R)$ such that $\pi:=\pi_f\otimes\pi_\infty\in\Pi_{disc}
(\G)$. Then, for every irreducible admissible representation
$\pi_f$ of $\G(\Ade_f)$ and every $k\in\Z$,
\[\dim\sigma^k(\pi_f)=\sum_{\pi_\infty\in\Pi_\infty(\pi_f)}m(\pi_f\otimes
\pi_\infty)\dim\H^k(\ggoth,\K'_\infty;\pi_\infty\otimes W).\]

Vogan and Zuckerman have classified all the admissible representations
$\pi_\infty$ of $\G(\R)$ such that $\H^*(\ggoth,\K'_\infty;\pi_\infty\otimes W)
\not=0$ in \cite{VZ}, and Adams and
Johnson have constructed local Arthur packets
for these representations in \cite{AJ}. (It is part of our assumptions
that their construction is compatible with the local and global
Arthur conjectures of
section \ref{L2A}.) We will follow Kottwitz's exposition of their results
in section 9 of \cite{K-SVLR}.

Let $\theta$ be the Cartan involution of $\G(\R)$ that is the identity on
$\K_\infty$. For every real reductive group $H$, let $q(H)=\frac{1}{2}\dim(H/
K_H)$, where $K_H$ is a maximal compact-modulo-center subgroup of $H$.

Fix $\pi_f$ such that $\Pi_\infty(\pi_f)\not=\varnothing$. By part (ii)
of condition (C), $\pi_f$
determines a global Arthur parameter $\psi$, and we write
$\psi_\infty$ for the local Arthur parameter of $\G_\R$ defined by $\psi$.
The set $\Pi_\infty(\pi_f)$ is a subset of the local
Arthur packet associated to $\psi_\infty$.
If $\pi_\infty\in\Pi_\infty(\pi_f)$ and $\pi=\pi_f\otimes\pi_\infty$, then
the character $\langle .,\pi\rangle$ of $\Sgoth_\psi$ factors as
$\langle .,\pi_f\rangle\langle .,\pi_\infty\rangle$, where both factors
are characters of $\Sgoth_\psi$, and the first (resp. second) factor depends
only on $\pi_f$ (resp. $\pi_\infty$). By the multiplicity formula in
section \ref{L2A}, the fact that $m(\pi)=m(\psi,\pi)\not=0$ means that
the character $\langle .,\pi_\infty\rangle$ of $\Sgoth_\psi$ is uniquely
determined by $\pi_f$.
\footnote{$\langle ,.\pi_\infty\rangle$ is actually a character of the bigger
group $\Sgoth_{\psi_\infty}$, but its values on $\Sgoth_{\psi_\infty}$
are not determined by $\pi_f$, otherwise $\Pi_{\infty}(\pi_f)$ would be a
singleton, and this is not the case in general (cf case 3 on page 90 of
Rogawski's paper \cite{Ro} for a counterexample if $\G=\GU(2,1)$).}

Let $\pi_\infty\in\Pi_\infty(\pi_f)$. Then there is a relevant
pair $(L,Q)$ such that $\pi_\infty$ comes by cohomological induction from
a 1-dimensional representation of $L$, cf pages 194-195 of \cite{K-SVLR}.
Here $Q$ is a parabolic subgroup of $\G_\C$ and $L$ is a Levi component of
$Q$ that is defined over $\R$. By proposition 6.19 of \cite{VZ},
$\pi_\infty\otimes W$ can only have $(\ggoth,\K'_\infty)$-cohomology in degrees
belonging to $R+2\Nat$,
with $R=\dim_\C(\mathfrak{u}\cap\mathfrak{p})$, where $\mathfrak{u}$
is the Lie algebra of the unipotent radical of $Q$ and $\mathfrak{p}$ is
the $-1$-eigenspace for $\theta$ acting on $\ggoth$. Let
$\mathfrak{l}$ be the (complex) Lie algebra of $L$. As $\mathfrak{l}$
and $\mathfrak{u}$ are invariant under $\theta$ (by construction of $L$ and
$Q$), we see easily that
\[\dim_\C(\mathfrak{p})=2R+\dim_\C(\mathfrak{l}\cap\mathfrak{p}),\]
hence $R=q(\G_\R)-q(L)$. So the parity of $R$ is determined by the parity of
$q(L)$.

Now lemma 9.1 of \cite{K-SVLR} says that
\[(-1)^{q(L)}=\langle \lambda_{\pi_\infty},s_\psi\rangle,\]
where $s_\psi\in\Sgoth_\psi$ is determined by the global parameter $\psi$
(if we see global parameters as morphisms $\psi:\Lf_\Q\times\SL_2(\C)\fl
{}^L\G$, then $s_\psi$ is the image by $\psi$ of the nontrivial central
element of $\SL_2(\C)$) and $\lambda_{\pi_\infty}$ is the character of
$\Sgoth_{\psi_\infty}\supset\Sgoth_\psi$ defined on page 195 of
\cite{K-SVLR}. But lemma 9.2 of \cite{K-SVLR} implies that the product
$\lambda_{\pi_\infty}\langle .,\pi_\infty\rangle$ is independent of
$\pi_\infty$ in the Arthur packet of $\psi_\infty$,
so the restriction of $\lambda_{\pi_\infty}$ to $\Sgoth_\psi$ depends only on
$\pi_f$. This implies that the parity of $R$ depends only on $\pi_f$,
which gives Theorem \ref{thC}.

We have to be a bit careful if $\G$ is a quasi-split even special
orthogonal group, because in that case Arthur proved his conjectures only
up to conjugacy by the quasi-split even orthogonal group $\G'\supset\G$.
But, if $\pi_\infty$ is a representation of $\G(\R)$ with nonzero
$(\ggoth,\K'_\infty)$-cohomology, then the integer $R$ associated to
$\pi_\infty$ as above does not change if we replace $\pi_\infty$ by a
$\G'(\R)$-conjugate (because the relevant pair $(L,Q)$ is just
replaced by a $\G'(\R)$-conjugate). So the proof above still applies.

\section{K\"unneth conjecture and finite correspondences}

From the proofs of the theorem of Katz and Messing and that of ours,
one may wonder if the K\"unneth conjecture or the sign conjecture
can be proved for more general projective smooth varieties,
only using finite correspondences. More precisely, consider the $\Q$-subspace
$$
Z_{\mathrm{fin}, \H^{\ast}}^d \subseteq \H^{2d}(X\times_k X)(d)
$$
spanned by the cohomology classes of all the cycles of codimension $d$ on $X\times_k X$,
that are \textit{finite} in both projections to $X$ (where $d=\dim X$).

\begin{conjecture}
\label{conj-fin-corr}
For every projective smooth variety $X/k$ and every $i\in \Z$, the K\"unneth projector
$\pi^i_X$ (resp. the projector $\pi^{+}_X$) belongs to $Z^d_{\mathrm{fin}, \H^{\ast}}$.
\end{conjecture}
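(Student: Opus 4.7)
The plan is to emulate the two known successful cases, Katz--Messing over finite fields and Theorem~\ref{thC} above, in which a rich $\Q$-algebra of \emph{finite} self-correspondences of $X$ acts on $\H^{\ast}(X)$ and is large enough that polynomials in its generators separate the K\"unneth components. First I would catalogue the geometric sources of finite correspondences available on an arbitrary smooth projective $X$: the diagonal $\Delta_X$; cycles of the form $(f\times g)_{\ast}[Y]$ for two finite morphisms $f,g\colon Y\fl X$ (the Hecke-style construction); fiber products $X\times_Z X$ coming from finite morphisms $X\fl Z$ such as generic linear projections to $\Proj^d$; and, when $X$ has extra structure (abelian variety, involution, etc.), the graphs of the corresponding endomorphisms.

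Next I would verify the conjecture in cases where this pool already suffices, both as a sanity check and as a blueprint. For abelian varieties, the Beauville decomposition expresses each $\pi^i_X$ as an explicit polynomial in the finite maps $[n]^{\ast}$, giving the conjecture directly. For a product $X_1\times X_2$ of two varieties for which the conjecture is known, one takes exterior products of finite correspondences. For a curve $C$ carrying a nontrivial involution $\iota$ (hyperelliptic, bielliptic, CM), the cycle $\tfrac{1}{2}(\Delta_C+\Gamma_\iota)$ is a finite correspondence computing the sign projector $\pi^{+}_C$, handling that case of the conjecture.

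For the general case, I would pursue two standard reductions. The first is a Lefschetz-style induction on $\dim X$: for a smooth hyperplane section $Y\hookrightarrow X$, weak Lefschetz reduces the construction of $\pi^i_X$ for $i<\dim X$ to the analogous problem on $Y$, and Poincar\'e duality handles the remaining degrees, ultimately reducing to the case of curves. The second reduction, modelled on the level-raising step of section~\ref{sec-red-steps}, exploits a finite surjection $f\colon X'\fl X$: if a finite correspondence $\Gamma'$ on $X'\times X'$ realizes the right projector on $\H^{\ast}(X')$, then the pushforward $(f\times f)_{\ast}\Gamma'$ is finite on $X\times X$, and the adjunction/trace identity used in section~\ref{sec-red-steps} should recover the projector of $X$ after dividing by an appropriate power of $\deg(f)$.

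The main obstacle, I expect, is the genuine paucity of finite self-correspondences on a very general smooth projective variety. A typical $X$ has no self-maps and no arithmetic Hecke algebra; the only finite correspondences one sees come from fiber-product constructions $X\times_Z X$ attached to finite maps $X\fl Z$, and these act on each $\H^{2k}(X)$ through the one-dimensional subspace spanned by powers of the hyperplane class, leaving the primitive cohomology invisible. Already for a curve of genus $\ge 2$ with trivial automorphism group, no construction in the list above is known to yield a finite correspondence distinguishing $\H^1$ from $\H^0\oplus\H^2$. Breaking this impasse seems to require a genuinely new geometric input---moduli-theoretic families of subvarieties mapping finitely to $X$, or a systematic use of alterations---and it is at this step that an attempt to prove Conjecture~\ref{conj-fin-corr} appears to demand fundamentally new ideas.
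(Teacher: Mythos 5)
You are addressing an item that the paper leaves as a \emph{conjecture}, so there is no proof to match against, and the fact that your attempt stalls is not in itself a defect. What is missing, however, is the paper's main point about this conjecture, which changes the entire framing and renders your central ``obstacle'' moot.

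The paper proves (Proposition~\ref{moving-lemma}) that over an algebraically closed field $k$ the subgroup $Z^d_{\mathrm{fin},\sim_{\mathrm{rat}}}$ of finite-in-both-projections correspondences is \emph{all} of $Z^d_{\sim_{\mathrm{rat}}}$: by the Friedlander--Lawson moving lemma, every codimension-$d$ cycle on $X\times_k X$ is rationally equivalent to one meeting every fibre of both projections in dimension zero, and a proper quasi-finite map is finite. A Galois-averaging argument then extends this to any perfect $k$ for a classical Weil theory. Consequently $Z^d_{\mathrm{fin},\H^\ast} = \mathrm{Im}(\mathrm{cl}^d)$, so Conjecture~\ref{conj-fin-corr} is \emph{equivalent} to the K\"unneth (resp.\ sign) conjecture, not a strengthening of it. Your diagnosis---that a ``very general'' $X$ has too few finite self-correspondences to touch primitive cohomology, so that a fundamentally new geometric source of finite correspondences is needed---is therefore exactly the misconception the paper's Proposition~\ref{moving-lemma} is designed to dispel. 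Finite correspondences are just as plentiful, in cohomology, as arbitrary ones; the difficulty is entirely the algebraicity of $\pi^i_X$, i.e.\ the original K\"unneth conjecture. (Your curve example also does not illustrate the worry: $\Delta_C$, $\{pt\}\times C$ and $C\times\{pt\}$ are already finite in both projections, so the K\"unneth projectors of a curve are finite correspondences with no moving lemma and no automorphisms required.) Your strategic remarks---Lefschetz induction, pushforward along finite covers, products---are sensible reductions for the K\"unneth problem itself, but they sit at one remove from what the paper actually establishes in this section, and they should be preceded by the observation that ``finite'' costs nothing once algebraicity is granted.
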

This is a priori stronger than the K\"unneth (resp. the sign) conjecture.
It turns out that the apparent strength is only illusory, if either (a) $k$ is algebraically closed
or (b) $k$ is perfect and $H^{\ast}$ is a classical Weil cohomology theory. 
The case (a) is a consequence of the following proposition.

\begin{proposition}
\label{moving-lemma}
Suppose that $k$ is an algebraically closed field.  Then the abelian group
$$
Z_{\mathrm{fin}, \sim_{\mathrm{rat}}}^d \subseteq Z_{\sim_{\mathrm{rat}}}^d
$$
generated by the cycles mapping finitely to $X$ in both projections, 
in the group of codimension $d$ cycles on $X$ modulo rational equivalence,
is in fact equal to the whole $Z^d_{\sim_{\mathrm{rat}}}$.
\end{proposition}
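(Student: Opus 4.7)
The plan is to prove this as a moving-lemma statement on the smooth projective $2d$-fold $X\times_k X$, using Chow's moving lemma applied simultaneously to the infinite algebraic family of vertical and horizontal fibres of the two projections $p_1,p_2: X\times X \to X$. By bilinearity we may reduce to the case of an irreducible closed subvariety $W\subseteq X\times X$ of dimension $d$. The desired "finite on both projections" condition is equivalent, since each projection is proper, to asking that for every $x\in X$ both intersections $W\cap(\{x\}\times X)$ and $W\cap(X\times\{x\})$ be $0$-dimensional. So the goal becomes: move $W$ within its rational equivalence class so that its components meet every vertical and every horizontal fibre properly.

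The key input will be an abundant supply of "nice" cycles, constructed by Bertini plus a dimension count on an incidence variety. Fix a very ample line bundle $\mathcal{L}$ on $X\times X$; for $m$ sufficiently large, the restriction maps $H^0(X\times X,\mathcal{L}^{\otimes m})\to H^0(\{x\}\times X,\mathcal{L}^{\otimes m}|)$ are surjective of large rank, so the incidence variety
\[
I_m\;=\;\bigl\{(x,H_1,\ldots,H_d) : \dim(H_1\cap\cdots\cap H_d\cap(\{x\}\times X))>0\bigr\}\subseteq X\times |\mathcal{L}^{\otimes m}|^d
\]
has codimension in $|\mathcal{L}^{\otimes m}|^d$ greater than $d$; its image in $|\mathcal{L}^{\otimes m}|^d$ is therefore a proper subset. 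The symmetric statement holds for the other projection. Hence a general complete intersection $C=H_1\cap\cdots\cap H_d$ of $d$ members of $|\mathcal{L}^{\otimes m}|$ is a smooth irreducible $d$-dimensional subvariety (by Bertini, using $k=\bar{k}$) that is finite over $X$ in both projections, so $[C]\in Z^d_{\mathrm{fin},\sim_{\mathrm{rat}}}$.

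To conclude, I would apply Chow's moving lemma on $X\times X$ to express $[W]$ as an integer linear combination of classes of such general complete intersections $C_i$, working in the very ample embedding furnished by $\mathcal{L}^{\otimes m}$ as in Chow's original argument via generic projections from a linear subspace. Alternatively one can argue by induction on the defect $\dim W - \dim p_1(W)$ plus $\dim W - \dim p_2(W)$: if $W$ has a bad component lying over a proper subvariety $Y\subsetneq X$, replace it with a deformation obtained by incorporating a pencil of sections of $\mathcal{L}^{\otimes m}$, which strictly reduces the defect and allows the inductive step.

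The main obstacle is precisely that Chow's moving lemma, in its classical statement, only guarantees proper intersection with finitely many fixed subvarieties, whereas here one must simultaneously achieve proper intersection with the entire infinite algebraic families $\{x\}\times X$ and $X\times\{y\}$ indexed by the closed points of $X$. The dimension count on the incidence variety $I_m$ is exactly what handles the whole family uniformly, and the hypothesis that $k$ is algebraically closed is used both to feed Bertini and to ensure that a dense set of $x\in X$ indexes the fibres.
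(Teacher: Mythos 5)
Your proposal correctly identifies the central difficulty (one must move $W$ to intersect an \emph{infinite} algebraic family of fibres properly, not finitely many fixed subvarieties), but the step you use to overcome it is wrong, and the gap is fatal. You claim that ``Chow's moving lemma on $X\times X$ [allows one] to express $[W]$ as an integer linear combination of classes of such general complete intersections $C_i$.'' Chow's moving lemma does no such thing: it moves a cycle within its rational equivalence class to meet a prescribed \emph{finite} collection of subvarieties properly, it does not realise an arbitrary class as a combination of complete intersections. Indeed the conclusion you need is simply false: already for $X$ a curve and $d=1$, the general complete intersections of $d=1$ hypersurface sections of $X\times X$ in a fixed embedding all lie in a single rational equivalence class (the hyperplane class), whereas $\mathrm{Pic}(X\times X)$ is of higher rank as soon as $X$ has genus $\geq 1$ (the diagonal, the two fibre classes, and Hom classes all contribute). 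So your Bertini-plus-incidence-variety argument, while correct in showing that general complete intersections are finite over $X$ in both projections, produces an abundant supply of good cycles that does \emph{not} generate $Z^d_{\sim_{\mathrm{rat}}}$; it establishes nonemptiness of $Z^d_{\mathrm{fin},\sim_{\mathrm{rat}}}$, not the stated equality. Your ``alternative'' induction on defect is also not well-posed: $\dim W - \dim p_1(W)$ records only the generic fibre dimension, whereas quasi-finiteness is a condition on \emph{all} fibres, and a dominating $W$ of the right dimension can still have positive-dimensional special fibres.

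The missing ingredient is precisely a moving lemma adapted to infinite algebraic families, and this is what the paper invokes: the generalized moving lemma of Friedlander and Lawson \cite{FL}. Their result lets one move a cycle within its rational equivalence class so as to intersect properly every member of an algebraic family of subvarieties of bounded degree; the boundedness hypothesis is verified here because all the vertical fibres $\{x\}\times X$ (resp.\ horizontal fibres $X\times\{y\}$) are algebraically equivalent to one another, hence of the same degree in any fixed projective embedding. Once proper intersection with all fibres (i.e., $0$-dimensional intersection) is achieved, quasi-finiteness plus properness gives finiteness in both projections. If you want to keep your framework, what you would need to prove is a moving lemma of this Friedlander--Lawson type, and the incidence-variety dimension count you set up is in the right spirit but needs to be applied to a deformation of the given $W$ (e.g., via a Chow cone/join construction over a generic linear centre, run uniformly over the parameter space $X$ of fibres), not to a complete intersection unrelated to $W$.
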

\begin{proof}
It is enough to prove that any irreducible closed subscheme of codimension $d$ on $X\times_k X$
is rationally equivalent to a cycle that is finite in both projections.
Because a proper quasi-finite map is finite, it is the matter of finding a cycle in the rational equivalence class,
that meets all the closed fibres over $k$-rational points in both projections properly, that is, in dimension at most zero.
This follows from the generalized moving lemma \cite{FL} of Friedlander and Lawson:
In any fixed projective embedding, all the fibres of the first (resp. second) projection
have the same degree, as they are all algebraically equivalent.
\end{proof}

Now, in the case (b), let $k$ be a perfect field, and suppose that 
$Z$ is an algebraic cycle of codimension $d$ on $X\times_k X$.
If $\H^{\ast}$ is a classical Weil cohomology theory, then we have a corresponding
cohomology theory $\H^{\ast}_{/k'}$ for every algebraic extension $k'$ of $k$,
compatible with the cycle class maps in an obvious sense.

Let $\overline{k}$ be an algebraic closure of $k$. By Proposition \ref{moving-lemma}, $Z\otimes_k \overline{k}$ is rationally --- 
hence homologically --- equivalent to a cycle $Z'$
which is finite over $X \otimes_k \overline{k}$ in both projections.
Let $k'$ be a finite Galois extension of $k$ over which $Z'$ is defined.
Taking the ``average'' of the $\Gal(k'/k)$-translates of $Z'$
(which requires $\Q$-coefficients), 
one gets a cycle $Z'_0$ on $X$, defined over $k$, that is finite in both projections
and has the same cohomology class as $Z$.

This means that, in the two cases, if the K\"unneth conjecture is true for $X/k$, then each $\pi^i_X$
is in fact a linear combination of the cohomology classes of finite correspondences over $X$.
Finding enough such finite correspondences for general $X/k$ 
(which can be turned into the problem of finding certain finite extensions of the function field $k(X)$)
seems to be an interesting open problem.

\bibliographystyle{plain}
\bibliography{sign}

\end{document}